\newcommand{\BR}{\bar{\mathbb{R}}}
\newcommand{\inner}[2]{\langle{#1},{#2}\rangle}
\newcommand{\linner}[2]{ {\langle{#1},{#2}\rangle}_\lambda}
\newcommand{\Inner}[2]{\left\langle{#1},{#2}\right\rangle}
\newcommand{\norm}[1]{\|#1\|}
\newcommand{\lnorm}[1]{ {\|#1\|}_\lambda}
\newcommand{\Lnorm}[1]{ {\left\|#1\right\|}_\lambda}
\newcommand{\Norm}[1]{\left\|#1\right\|}
\newcommand{\tos}{\rightrightarrows} 
\newtheorem{theorem}{Theorem}
\newtheorem{lemma}[theorem]{Lemma}
\newtheorem{corollary}[theorem]{Corollary}
\title{Weak convergence on Douglas-Rachford method}
\author{B. F. Svaiter\thanks{ IMPA, 
Estrada Dona Castorina 110, 22460-320 Rio de
    Janeiro, Brazil ({\tt benar@impa.br}) 
    tel: 55 (21) 25295112, fax: 55 (21)25124115.  }\hspace{.5em}
    \thanks{Partially supported by CNPq
    grants 300755/2005-8, 475647/2006-8 and by PRONEX-Optimization}
}
\date{}
\begin{document}

\maketitle

\begin{abstract}
  We prove that the sequences generate by the Douglas-Rachford method
  converge weakly to a solution of the inclusion problem
  \\
  \\
  2000 Mathematics Subject Classification: 47H05, 49J52, 47N10.
  \\
  \\
  Key words: Douglas-Rachford, weak convergence. 
  %
  %
  Douglas-Rachford method, weak convergence.
  \\
\end{abstract}

\pagestyle{plain}



From now on $H$ is a real Hilbert space with inner product
$\inner{\cdot}{\cdot}$ and associated norm $\norm{\cdot}$.
A \emph{point-to-set operator} $T:H\tos H$ is a
relation
\(
  T\subset H\times H
\)
and for $x\in H$,
\[
T(x)=\{v\in H\;|\; (x,v)\in T\}.
\]
An operator $T:H\tos H$ is \emph{monotone} if
\[ \inner{x_1-x_2}{v_1-v_2}\geq 0, \forall (x_1,v_1),(x_2,v_2)\in T
\]
and it is \emph{maximal monotone} if it is monotone and maximal in the family
of monotone operators with respect to the partial order of inclusion.

Let $A,B$ be maximal monotone operators in $H$. Consider the problem of
finding $x\in H$ such that
\begin{equation}
  \label{eq:p}
  0\in A(x)+B(x).
\end{equation}
Douglas-Rachford method (with sumable residual error) works as follows. Let
$\{\alpha_k\}$, $\{\beta_k\}$ be sequences of positive error tolerance such that
\[
\sum_{k=1}^\infty \alpha_k<\infty, 
  \quad \sum_{k=1}^\infty \beta_k<\infty.
\]
Take $\lambda>0$,
$(x_0,b_0)\in B $ and for $k=1,2,\dots$
\begin{description}
\item [(a)] Find  $(y_{k},a_{k})$ such that 
  \begin{equation}
    \label{eq:s1}
    (y_{k},a_{k})\in A,\quad
   \Norm{ y_{k}+\lambda a_{k}-\left(x_{k-1}-\lambda b_{k-1}\right)}
    \leq \alpha_k
  \end{equation}
\item  [(b)] Find $(x_{k},b_{k})$ such that
  \begin{equation}
    \label{eq:s2}
    (x_{k},b_{k})\in B,\quad
    \Norm{x_{k}+\lambda b_{k}-\left( y_{k}+\lambda b_{k-1}\right)}\leq \beta_k
  \end{equation}
\end{description}
Existence of $(y_k,a_k)$, $(x_k,b_k)$ as above follows from Minty's
Theorem~\cite{Minty}.  From now on $\{(x_k,b_k)\}_{k=0} ^\infty$,
$\{(y_k,a_k)\}_{k=1} ^\infty$ are sequences generated by the above
algorithm.
This algorithm was proposed by Douglas and Rachford~\cite{DR} for the case
where $A$ and $B$ are linear, and was extended to arbitrary maximal monotone
operators by Lions and Mercier~\cite{LM}.
These later authors proved that $x_k+\lambda b_k$ converges weakly
to a point $\bar z$ such that, for some $\bar x$, $\bar b$,
\[ \bar z=\bar x+\lambda \bar b,\quad \bar b\in B(\bar x),\quad
-\bar b\in A(\bar x).
\]
Our aim is to prove the next theorem
\begin{theorem}
  \label{th:main}
  If $A,B$ are maximal monotone operators and the solution set of
  \[
  0\in A( x)+ B( x)
  \]
  is non-empty, then the sequences $\{(x_k,b_k)\}$ and $\{(y_k,a_k)\}$
  generated by Douglas-Rachford method converges
  weakly to some $(\bar x,\bar b)$ and $(\bar x, -\bar b)$ respectively,
  such that
  \[ \bar b\in B(\bar x),\qquad -\bar b\in A(\bar x)
  \]
  and hence, $0\in A(\bar x)+B(\bar x)$.
\end{theorem}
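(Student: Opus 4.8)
\emph{Outline.} The plan is to reduce the recursion to an inexact iteration of the Douglas--Rachford operator, recover by the classical argument the weak convergence of $z_k:=x_k+\lambda b_k$, and then — this is the real content — promote this to the separate weak convergence of $\{x_k\}$ and $\{b_k\}$ by a monotonicity argument. Write $J_A=(I+\lambda A)^{-1}$, $J_B=(I+\lambda B)^{-1}$, $R_A=2J_A-I$, $R_B=2J_B-I$, $T=\tfrac12(I+R_AR_B)$, so that $T$ is firmly nonexpansive and, whenever $b^*\in B(x^*)$ and $-b^*\in A(x^*)$, the point $z^*:=x^*+\lambda b^*$ is a fixed point of $T$; in particular $\operatorname{Fix}T\neq\emptyset$. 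Since $b_k\in B(x_k)$ and $x_k+\lambda b_k=z_k$ we have $x_k=J_B(z_k)$, and inserting \eqref{eq:s1}--\eqref{eq:s2} and using nonexpansiveness of $J_A,R_A,R_B$ gives $\Norm{z_k-T(z_{k-1})}\le\varepsilon_k$ with $\varepsilon_k\le\alpha_k+\beta_k$, hence $\sum_k\varepsilon_k<\infty$. Therefore $\Norm{z_k-z^*}\le\Norm{z_{k-1}-z^*}+\varepsilon_k$, so $\{\norm{z_k-z^*}\}$ converges and $\{z_k\}$ is bounded; by nonexpansiveness of $J_B$ so are $\{x_k\}$, $\{b_k\}=\{\lambda^{-1}(z_k-x_k)\}$, and (via \eqref{eq:s1}--\eqref{eq:s2}) $\{y_k\}$, $\{a_k\}$. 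Firm nonexpansiveness upgrades the estimate to $\Norm{z_k-z^*}^2\le\Norm{z_{k-1}-z^*}^2-\Norm{(I-T)z_{k-1}}^2+C_k$ with $\sum_kC_k<\infty$; telescoping yields $\sum_k\Norm{(I-T)z_k}^2<\infty$, so $(I-T)z_k\to0$ and $z_k-z_{k-1}\to0$ strongly.

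\emph{Weak limit of $z_k$ and its identification.} Feeding $z_k-z_{k-1}\to0$ back into \eqref{eq:s1}--\eqref{eq:s2} one extracts the two basic limits $y_k-x_{k-1}\to0$ and $a_k+b_{k-1}\to0$ strongly; consequently $\inner{y_{k+1}}{a_{k+1}}+\inner{x_k}{b_k}\to0$, and combining monotonicity of $B$ at $(x_k,b_k),(x^*,b^*)$ with monotonicity of $A$ at $(y_{k+1},a_{k+1}),(x^*,-b^*)$ gives $\inner{x_k-x^*}{b_k-b^*}\to0$ for every solution pair $(x^*,b^*)$. By the demiclosedness principle for $I-T$ together with Opial's lemma, $z_k\rightharpoonup\bar z$ for some $\bar z\in\operatorname{Fix}T$. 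Put $\bar x:=J_B(\bar z)$ and $\bar b:=\lambda^{-1}(\bar z-\bar x)$; then $\bar b\in B(\bar x)$ and $\bar x+\lambda\bar b=\bar z$, and the reflection identities characterising $\operatorname{Fix}T$ give $J_A(R_B\bar z)=\bar x$ and $-\bar b\in A(\bar x)$. Thus $(\bar x,\bar b)$ is itself a solution pair.

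\emph{Separating the two sequences (the crux).} Let $x_{k_j}\rightharpoonup x'$ along a subsequence; since $z_k\rightharpoonup\bar z$ we get $b_{k_j}\rightharpoonup b':=\lambda^{-1}(\bar z-x')$, whence $b'-\bar b=-\lambda^{-1}(x'-\bar x)$ and $\inner{x'-\bar x}{b'-\bar b}=-\lambda^{-1}\norm{x'-\bar x}^2$. Using $\inner{x_k-\bar x}{b_k-\bar b}\to0$ the limit $\ell:=\lim_j\inner{x_{k_j}}{b_{k_j}}$ exists and equals $\inner{x'-\bar x}{\bar b}+\inner{\bar x}{b'}$; hence $\inner{y_{k_j+1}}{a_{k_j+1}}\to-\ell$, while $y_{k_j+1}\rightharpoonup x'$ and $a_{k_j+1}\rightharpoonup-b'$ by the basic limits of the previous paragraph. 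Passing to the limit in $\inner{y_{k_j+1}-u}{a_{k_j+1}-w}\ge0$, valid for every $(u,w)\in A$, yields
\[
\inner{u-x'}{w+b'}\ \ge\ \ell-\inner{x'}{b'}\ =\ -\inner{x'-\bar x}{b'-\bar b}\ =\ \lambda^{-1}\norm{x'-\bar x}^2\ \ge\ 0 .
\]
Since this holds for all $(u,w)\in A$, maximality of $A$ forces $-b'\in A(x')$; substituting $(u,w)=(x',-b')$ back into the displayed inequality gives $\lambda^{-1}\norm{x'-\bar x}^2\le0$, i.e. $x'=\bar x$ and $b'=\bar b$. So $\bar x$ is the unique weak cluster point of the bounded sequence $\{x_k\}$, hence $x_k\rightharpoonup\bar x$, $b_k=\lambda^{-1}(z_k-x_k)\rightharpoonup\bar b$, and $y_k=x_{k-1}+(y_k-x_{k-1})\rightharpoonup\bar x$, $a_k=-b_{k-1}+(a_k+b_{k-1})\rightharpoonup-\bar b$. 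Together with $\bar b\in B(\bar x)$, $-\bar b\in A(\bar x)$ this is exactly the assertion.

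\emph{Where the difficulty lies.} Everything through the weak convergence $z_k\rightharpoonup\bar z$ is the Lions--Mercier analysis adapted to summable errors. The delicate point is the last step: weak convergence of the sum $x_k+\lambda b_k$ does not separate the two summands, and the naive attempt to pass to the limit in the graph of $B$ fails because $\inner{x_{k_j}}{b_{k_j}}$ need not tend to $\inner{x'}{b'}$. The device that saves the day is to rewrite the limiting monotonicity inequality for $A$ as the enlargement-type bound $\inner{u-x'}{w+b'}\ge\lambda^{-1}\norm{x'-\bar x}^2$, observe that its right-hand side is nonnegative so that in fact $-b'\in A(x')$ holds \emph{exactly}, and then recycle this membership into the same inequality to annihilate $\norm{x'-\bar x}^2$.
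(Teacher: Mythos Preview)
Your argument is correct, and the ``crux'' step is a nice self-contained device. The paper, however, proceeds quite differently.

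The paper never passes through the scalar variable $z_k=x_k+\lambda b_k$ or the firmly nonexpansive Douglas--Rachford operator $T$. Instead it works directly in the product space $H\times H$ equipped with the norm $\lnorm{(x,v)}^2=\norm{x}^2+\lambda^2\norm{v}^2$, proves a Fej\'er-type inequality for $p_k=(x_k,b_k)$ relative to the \emph{extended solution set} $S(A,B)=B\cap(-A)$ (Lemma~\ref{lm:bas} and Corollary~\ref{cr:qf}), and extracts the residual limits $x_{k-1}-y_k\to0$, $a_k+b_{k-1}\to0$ from that inequality. The identification of weak cluster points is then handled by an abstract result (Lemma~\ref{lm:aux}) valid for any finite family $T_1,\dots,T_m$ of maximal monotone operators: if $(x_{k,i},x^*_{k,i})\in T_k$ with $x_{k,i}-x_{j,i}\to0$ strongly, $\sum_k x^*_{k,i}\to\bar x^*$ strongly, and each factor converges weakly, then the weak limits lie in the respective graphs. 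Its proof goes through Fitzpatrick's representing function: each $\liminf\big(\inner{x_{k,i}}{x^*_{k,i}}-\inner{\bar x}{\bar x^*_k}\big)\ge0$, while the sum over $k$ tends to~$0$, forcing all the individual limits to vanish. Applied with $m=2$, $T_1=B$, $T_2=A$ and the pairs $(x_{k_j},b_{k_j})$, $(y_{k_j+1},a_{k_j+1})$, this yields $(\bar x,\bar b)\in S(A,B)$ directly, after which quasi-Fej\'er convergence pins down the unique weak limit.

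Your route is more elementary --- no Fitzpatrick functions --- and recycles the Lions--Mercier analysis for~$z_k$; the separation step exploits the affine constraint $x'+\lambda b'=\bar z=\bar x+\lambda\bar b$ to turn the monotonicity defect into the nonnegative quantity $\lambda^{-1}\norm{x'-\bar x}^2$, which maximality of $A$ then forces to vanish. The paper's route avoids ever choosing a distinguished decomposition of~$\bar z$ and packages the cluster-point argument into a lemma that works verbatim for sums of $m\ge2$ operators and for the projective splitting schemes of~\cite{ES1,ES2}; that generality is what the Fitzpatrick machinery buys.
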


For each $k$, there exists  unique pairs $(\hat y_k,\hat a_k)$,
$(\hat x_k,\hat b_k)$ such that
\begin{align}
\label{eq:e1}
(\hat y_k,\hat a_k)&\in A,\quad
\hat y_{k}+\lambda \hat a_{k}=x_{k-1}-\lambda b_{k-1}\\
\label{eq:e2}
(\hat x_k,\hat b_k)&\in B,\quad
\hat x_{k}+\lambda \hat b_{k}=\hat y_{k}+\lambda b_{k-1}
\end{align}
Existence of $(\hat y_k,\hat a_k)$, $(\hat x_k,\hat b_k)$ as above follows from Minty's
Theorem~\cite{Minty}.
Direct use of \eqref{eq:e1}  and 
\eqref{eq:e2} shows that for $k=1,2,\dots$
\begin{equation}
  \label{eq:update}
  x_{k-1}-\hat x_k=\lambda(\hat a_k+\hat b_k),\quad \lambda(b_{k-1}-\hat b_k)=\hat x_k-\hat y_k.
\end{equation}
and
\begin{equation}
  \label{eq:all}
\hat x_k-\hat y_k+\lambda(\hat a_k+\hat b_k)=\lambda(\hat a_k+b_{k-1}) =x_{k-1}-\hat y_k.
\end{equation}
Using \eqref{eq:s1},  \eqref{eq:e1} and the monotonicity of $A$
we conclude that
\begin{align}
  \nonumber
  \norm{y_k-\hat y_k}^2+\lambda^2\norm{a_k-\hat a_k}
    \leq&\norm{y_k-\hat y_k+\lambda(a_k-\hat a_k)}^2\\
  \label{eq:ye}
    =   &\norm{y_k+\lambda a_k-(x_{k-1}-\lambda b_{k-1})}^2
    \leq \alpha_k^2
\end{align}
Using \eqref{eq:e2},  \eqref{eq:s2}, the monotonicity of $B$,
triangle inequality 
and  the above inequality we have
\begin{align}
  \nonumber
 \norm{x_k-\hat x_k}^2+\lambda^2\norm{b_k-\hat b_k}
   \leq&\Norm{x_{k}-\hat x_k+\lambda( b_{k}-\hat b_k)}^2\\
  \nonumber
   \leq&\Norm{x_{k}+\lambda b_{k}-\left( \hat y_{k}+\lambda b_{k-1}\right)}^2\\
  \nonumber
   \leq&\left(
    \Norm{x_{k}+\lambda b_{k}-\left( y_{k}+\lambda b_{k-1}\right)}
      +\Norm{\hat y_k-y_k}\right)^2\\
  \label{eq:ps0}
   \leq & (\alpha_k+\beta_k)^2
\end{align}

We will use in $H\times H$ the inner product $\linner \cdot \cdot$ and
associated norm $\lnorm \cdot$,
\[
\linner{(x,v)}{(x',v')}=\inner{x}{x'}+\lambda^2\inner{v}{v'},
\quad \lnorm{p}=\sqrt\linner p p.
\]
Note that $H\times H$ endowed with the inner product $\linner \cdot\cdot$
is a Hilbert space isomorphic to $H\times H$ endowed with the canonical 
inner product  $\inner{(x,v)}{(x',v')}=\inner{x}{x'}+\inner{v}{v'}$.
Hence, the  strong/weak topologies of both spaces are the same.
To simplify the exposition, define
\begin{equation}
  \label{eq:def.ps}
  p_k=(x_k, b_k),\quad k=0,1,\dots,\qquad
  \hat p_k=(\hat x_k, \hat b_k),\quad k=1,2,\dots
\end{equation}
We have just proved in \eqref{eq:ps0} that
\begin{equation}
  \label{eq:pp}
  \Lnorm{p_k-\hat p_k}\leq \alpha_k+\beta_k,\qquad k=1,2,\dots
\end{equation}
The \emph{extended solution set}~\cite{ES1} of problem \eqref{eq:p} 
is
\[
S(A,B)= B\cap-A=\{ (z,w)\;|\; (z,w)\in B, 
   (z,-w)\in A\}
\]
\begin{lemma}
  \label{lm:bas}
  If $ p\in S(A,B)$ then, 
  \begin{align*}
  \lnorm{p_{k-1}- p}^2\geq
  \lnorm{\hat p_k- p}^2+\norm{x_{k-1}-\hat y_k}^2
   =\lnorm{\hat p_k- p}^2+\lambda^2\norm{\hat a_k+b_{k-1}}^2
\end{align*}
for $k=1,2,\dots$.
\end{lemma}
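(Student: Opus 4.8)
The plan is to reduce the inequality to the monotonicity inequalities of $A$ and $B$, evaluated at the exact iterates $(\hat y_k,\hat a_k)$, $(\hat x_k,\hat b_k)$ and at the point $p$. The equality on the right end of the statement is immediate: by \eqref{eq:all}, $x_{k-1}-\hat y_k=\lambda(\hat a_k+b_{k-1})$, so $\norm{x_{k-1}-\hat y_k}^2=\lambda^2\norm{\hat a_k+b_{k-1}}^2$; hence it suffices to prove $\lnorm{p_{k-1}-p}^2\geq\lnorm{\hat p_k-p}^2+\norm{x_{k-1}-\hat y_k}^2$.

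First I would use the identity $\lnorm{p_{k-1}-p}^2-\lnorm{\hat p_k-p}^2=\lnorm{p_{k-1}-\hat p_k}^2+2\linner{p_{k-1}-\hat p_k}{\hat p_k-p}$ and substitute the update relations \eqref{eq:update} in the forms $x_{k-1}-\hat x_k=\lambda(\hat a_k+\hat b_k)$ and $\lambda(b_{k-1}-\hat b_k)=\hat x_k-\hat y_k$, together with the definition \eqref{eq:def.ps} of $p_{k-1},\hat p_k$ and the coordinatewise expansion of $\linner{\cdot}{\cdot}$. This gives $\lnorm{p_{k-1}-\hat p_k}^2=\lambda^2\norm{\hat a_k+\hat b_k}^2+\norm{\hat x_k-\hat y_k}^2$ and $2\linner{p_{k-1}-\hat p_k}{\hat p_k-p}=2\lambda\inner{\hat a_k+\hat b_k}{\hat x_k-z}+2\lambda\inner{\hat x_k-\hat y_k}{\hat b_k-w}$, where $p=(z,w)$. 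In parallel, writing $\hat a_k+b_{k-1}=(\hat a_k+\hat b_k)+(b_{k-1}-\hat b_k)$ and using $\lambda(b_{k-1}-\hat b_k)=\hat x_k-\hat y_k$ once more yields $\lambda^2\norm{\hat a_k+b_{k-1}}^2=\lambda^2\norm{\hat a_k+\hat b_k}^2+2\lambda\inner{\hat a_k+\hat b_k}{\hat x_k-\hat y_k}+\norm{\hat x_k-\hat y_k}^2$.

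After these substitutions the terms $\lambda^2\norm{\hat a_k+\hat b_k}^2$ and $\norm{\hat x_k-\hat y_k}^2$ cancel on both sides, so the claimed inequality is equivalent (after dividing by $2\lambda>0$ and rearranging) to $\inner{\hat a_k+\hat b_k}{\hat y_k-z}+\inner{\hat x_k-\hat y_k}{\hat b_k-w}\geq0$. The decisive step is then to split $\hat a_k+\hat b_k=(\hat a_k+w)+(\hat b_k-w)$ and regroup, which turns the left-hand side into $\inner{\hat a_k+w}{\hat y_k-z}+\inner{\hat b_k-w}{\hat x_k-z}$. Since $p=(z,w)\in S(A,B)=B\cap-A$, we have $(z,-w)\in A$ and $(z,w)\in B$; combining with $(\hat y_k,\hat a_k)\in A$ from \eqref{eq:e1} and $(\hat x_k,\hat b_k)\in B$ from \eqref{eq:e2}, monotonicity of $A$ gives $\inner{\hat y_k-z}{\hat a_k+w}\geq0$ and monotonicity of $B$ gives $\inner{\hat x_k-z}{\hat b_k-w}\geq0$, so their sum is nonnegative and we are done.

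I do not expect a genuine conceptual obstacle; the work is entirely bookkeeping. The one thing that needs care is the handling of the weights $\lambda$ and $\lambda^2$ that $\linner{\cdot}{\cdot}$ attaches to the second coordinate, used in tandem with the two equivalent forms of the update relation \eqref{eq:update}, so that all the quadratic terms cancel and exactly the sum of the two monotonicity products survives.
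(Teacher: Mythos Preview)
Your proposal is correct and follows essentially the same approach as the paper: the same three-point identity $\lnorm{p_{k-1}-p}^2=\lnorm{p_{k-1}-\hat p_k}^2+2\linner{p_{k-1}-\hat p_k}{\hat p_k-p}+\lnorm{\hat p_k-p}^2$, the same computation $\lnorm{p_{k-1}-\hat p_k}^2=\lambda^2\norm{\hat a_k+\hat b_k}^2+\norm{\hat x_k-\hat y_k}^2$ via \eqref{eq:update}, and the same two monotonicity inequalities for $A$ and $B$ at the pairs $(\hat y_k,\hat a_k),(z,-w)$ and $(\hat x_k,\hat b_k),(z,w)$. The only cosmetic difference is the order of operations: the paper first bounds the cross term $\linner{p_{k-1}-\hat p_k}{\hat p_k-p}\geq\lambda\inner{\hat x_k-\hat y_k}{\hat a_k+\hat b_k}$ and then completes the square to $\norm{\hat x_k-\hat y_k+\lambda(\hat a_k+\hat b_k)}^2=\norm{x_{k-1}-\hat y_k}^2$, whereas you expand both sides first and reduce to $\inner{\hat a_k+w}{\hat y_k-z}+\inner{\hat b_k-w}{\hat x_k-z}\geq 0$ before invoking monotonicity.
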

\begin{proof}
   First note that $p=(x,b)$ with $b\in B(x)$ and $-b\in A(x)$. Using
   the first equality in \eqref{eq:update}, the monotonicity of $B$ and
 the monotonicity of $A$ we conclude that
  \begin{align*}
   \inner{x_{k-1}-\hat x_k}{\hat x_k- x}=&
     \lambda\inner{\hat a_k+\hat b_k}{\hat x_k- x}\\
    =&\lambda\bigg[
       \inner{\hat a_k+ b}{\hat x_k- x}+\inner{\hat b_k- b}{\hat x_k- x}
    \bigg]\\
   \geq& \lambda \inner{\hat a_k+ b}{\hat x_k- x}\\
   =&\lambda\bigg[ \inner{\hat a_k+ b}{\hat x_k-\hat y_k}+
     \inner{\hat a_k+ b}{\hat y_k-  x}
   \bigg]\geq \lambda\inner{\hat a_k+ b}{\hat x_k-\hat y_k}
  \end{align*}
  Using the above inequality, the second equality in \eqref{eq:update}
  and \eqref{eq:def.ps} we have
\begin{align*}
  \linner{p_{k-1}-\hat p_k}{\hat p_k- p}=&\inner{x_{k-1}-\hat x_k}{\hat x_k- x}
    +\lambda^2\inner{b_{k-1}-\hat b_k}{\hat b_k- b}\\
   \geq & \lambda\inner{\hat a_k+ b}{\hat x_k-\hat y_k}
   +\lambda\inner{\hat x_k-\hat y_k}{\hat b_k- b}\\
  =&\lambda\inner{\hat x_k-\hat y_k}{\hat a_k+\hat b_k}
\end{align*}
Using \eqref{eq:def.ps} and \eqref{eq:update} we have
  \[ \lnorm{p_{k-1}-\hat p_k}^2=\lambda^2\norm{\hat a_k+\hat b_k}^2
      +\norm{\hat x_{k}-\hat y_k}^2
  \]
 Therefore, 
  \begin{align*}
    \lnorm{p_{k-1}- p}^2=&\lnorm{p_{k-1}-\hat p_k}^2
     +2\linner{p_{k-1}-\hat p_k}{\hat p_k- p}+ \lnorm{\hat p_k- p}^2\\
   \geq&
     \lambda^2\norm{\hat a_k+\hat b_k}^2
      +\norm{\hat x_{k}-\hat y_k}^2
     +2\lambda\inner{\hat x_{k}-\hat y_k}{\hat a_k+\hat b_k}\\
     &\mbox{}+
   \lnorm{\hat p_k-p}^2
      \\
     =&\lnorm{\hat p_k-p}^2
  +\norm{\hat x_k-\hat y_k+\lambda(b_k+a_k)}^2
\end{align*}
To end the proof, use the above inequality and \eqref{eq:all}.
\end{proof}
\begin{corollary}
  \label{cr:qf}
  The sequence $\{p_k\}$ is Quasi-Fejer convergent to $S(A,B)$.
  Therefore it has at most one weak cluster point in this set, and it is bounded if $S(A,B)\neq\emptyset$.
\end{corollary}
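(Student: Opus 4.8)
The plan is to read off the quasi-Fej\'er inequality by combining Lemma~\ref{lm:bas} with the residual estimate~\eqref{eq:pp}. Fix $p\in S(A,B)$. Discarding the nonnegative term $\norm{x_{k-1}-\hat y_k}^2$ in Lemma~\ref{lm:bas} gives $\lnorm{\hat p_k-p}\leq\lnorm{p_{k-1}-p}$, while \eqref{eq:pp} gives $\lnorm{p_k-\hat p_k}\leq\alpha_k+\beta_k$. The triangle inequality in $(H\times H,\linner{\cdot}{\cdot})$ then yields
\[
\lnorm{p_k-p}\leq\lnorm{p_k-\hat p_k}+\lnorm{\hat p_k-p}\leq\lnorm{p_{k-1}-p}+(\alpha_k+\beta_k),\qquad k=1,2,\dots,
\]
and since $\sum_k(\alpha_k+\beta_k)<\infty$ this is precisely the assertion that $\{p_k\}$ is quasi-Fej\'er convergent to $S(A,B)$.

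From this recursion I would extract the remaining two claims by the standard argument. Writing $c_k=\lnorm{p_k-p}$ and $e_k=\alpha_k+\beta_k$, the sequence $d_k=c_k-\sum_{j=1}^k e_j$ is nonincreasing and bounded below by $-\sum_{j=1}^\infty e_j>-\infty$, hence convergent; therefore $\lnorm{p_k-p}$ converges for every $p\in S(A,B)$. In particular, telescoping gives $\lnorm{p_k-p}\leq\lnorm{p_0-p}+\sum_{j=1}^\infty e_j$ for all $k$, so $\{p_k\}$ is bounded whenever $S(A,B)\neq\emptyset$.

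For the uniqueness of a weak cluster point inside $S(A,B)$ I would run the classical Opial argument, working in $(H\times H,\linner{\cdot}{\cdot})$, whose weak topology coincides with the canonical one. Suppose $p,p'\in S(A,B)$ are both weak cluster points of $\{p_k\}$; by the previous paragraph $\lnorm{p_k-p}\to\ell$ and $\lnorm{p_k-p'}\to\ell'$. Expanding
\[
\lnorm{p_k-p}^2=\lnorm{p_k-p'}^2+2\linner{p_k-p'}{p'-p}+\lnorm{p'-p}^2
\]
and passing to the limit along a subsequence $p_{k_i}\rightharpoonup p$ gives $\ell^2=\ell'^2-\lnorm{p'-p}^2$; repeating along a subsequence converging weakly to $p'$ gives $\ell'^2=\ell^2-\lnorm{p'-p}^2$. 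Adding the two identities forces $\lnorm{p'-p}=0$, i.e. $p=p'$. The only delicate point is the passage to the limit in the cross term: $p_{k_i}\rightharpoonup p$ implies $\linner{p_{k_i}-p'}{p'-p}\to\linner{p-p'}{p'-p}$ because $p'-p$ is a fixed vector and $\linner{\cdot}{\cdot}$ generates the same weak topology. I do not expect a genuine obstacle in the corollary itself; the substantive work is already contained in Lemma~\ref{lm:bas}, and what remains is the routine quasi-Fej\'er/Opial machinery.
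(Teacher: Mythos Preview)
Your proof is correct and follows essentially the same route as the paper: combine Lemma~\ref{lm:bas} with \eqref{eq:pp} via the triangle inequality to obtain the quasi-Fej\'er recursion, then invoke the standard Opial-type argument. The only difference is cosmetic: the paper simply cites Opial's Lemma for the boundedness and uniqueness conclusions, whereas you spell out those standard steps in full.
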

\begin{proof}
   Take $p\in S(A,B)$. 
  Using \eqref{eq:pp} and Lemma~\ref{lm:bas} we have
  \[ \norm{p_k- p}\leq \norm{p_k-\hat p_k}+\norm{\hat p_k- p}\leq
  \alpha_k+\beta_k+\norm{p_{k-1}- p}
  \]
  which proves that  $\{p_k\}$ is Quasi-Fejer convergent to
  $S_\lambda(A,B)$. The last part of the corollary follows from this
  result and also from Opial's Lemma~\cite{OP}.
\end{proof}

\begin{proof}[Proof of Theorem \ref{th:main}]
  We are assuming that problem~\eqref{eq:p} has a solution. Therefore, 
 $S(A,B)\neq\emptyset$. 
 Using Corollary~\ref{cr:qf} we conclude that $\{p_k\}$ is bounded. 
 Take
 \[
 p\in S(A,B)
 \]
 and let
 \[
 M=1+\sup \lnorm{p_k- p}
 \]
 Using Lemma~\ref{lm:bas} we have, for $k=1,2,\ldots$
 \[ \lnorm{p^*_k-p}^2\leq  \lnorm{p_{k-1}-p}^2-\norm{x_{k-1}-y^*_k}^2.
 \]
 Therefore, using also the concavity of $t\mapsto \sqrt{t}$ we conclude that
 \[ \lnorm{p^*_k-p} \leq \lnorm{p_{k-1}- p}
    -\frac{1}{2M}\norm{x_{k-1}-\hat y_k}^2
\]
Hence, combining the above inequality with \eqref{eq:pp} and triangle
inequality we obtain 
 \begin{align*}
   \lnorm{p_k- p}
   \leq& \lnorm{p_k-\hat p_k}+\lnorm{\hat p_k- p}\\
   \leq &(\alpha_k+\beta_k)+\lnorm{p_{k-1}- p}
    -\frac{1}{2M}\norm{x_{k-1}-\hat y_k}^2
 \end{align*} 
 Adding the above inequality for $k=1,2,\dots,n$ we conclude that
 \[ \frac{1}{2M}\sum_{k=1}^n\norm{x_{k-1}-\hat y_k}^2
    \leq \lnorm{p_0- p}+ \sum_{k=1}^n (\alpha_k+\beta_k)
\]
Therefore
\[ \sum_{k=1}^\infty \norm{x_{k-1}-\hat y_k}^2 <\infty
\]
and, using also \eqref{eq:all}, we conclude that 
\begin{equation*}
  \lim_{k\to\infty} x_{k-1}-\hat y_k =\lim_{k\to\infty} \hat a_k+b_{k-1}=0
\end{equation*}
Using \eqref{eq:ye} we have
\[ 
 \lim_{k\to\infty} y_k-\hat y_k =\lim_{k\to\infty} a_k- \hat a_k=0.
\]
Therefore
\begin{equation}
  \label{eq:conv.0}
 \lim_{k\to\infty} x_{k-1}-y_k =\lim_{k\to\infty} a_k+b_{k-1}=0
\end{equation}
and sequence $\{(y_k,b_k)\}$ is also bounded.

Since the sequence $\{p_k\}$ is bounded and $H\times H$
(endowed with the norm $\lnorm\cdot$)
is reflexive, this sequence
has weak cluster points.
Let $(\bar x,\bar b)$ be a weak cluster point of
the \emph{bounded sequence} $\{p_k=(x_k, b_k)\}$.
Using again the fact that  
$H\times H$  is reflexive, we conclude that 
there exists a subsequence $\{ (x_{k_j},b_{k_j})\}$ converging
weakly to $(\bar x,\bar b)$
and hence
\[
x_{k_j} \stackrel w \to \bar x,\;\;
b_{k_j} \stackrel w \to \bar b,\quad \mbox{ as } j\to\infty,
\]
which, together with \eqref{eq:conv.0} implies also that
\[
y_{k_j-1} \stackrel w \to \bar x,\;\;
a_{k_j-1} \stackrel w \to -\bar b,\quad \mbox{ as } j\to\infty.
\]
Using the two above equations, \eqref{eq:conv.0} and 
Lemma~\ref{lm:aux} (see Appendix~\ref{sec:app}) applied to the subsequences
 $\{(x_{k_j},b_{k_j}\}$, $\{(y_{k_j-1},a_{k_j-1}\}$,
we conclude that $(\bar x,\bar b)\in B$,
$(\bar x,-\bar b)\in A$, that is, $(\bar x,\bar b)\in S(A,B)$.

We have proved that the sequence $\{p_k\}$ has weak cluster points and
that all these weak cluster points are in $S(A,B)$. Using these results
and Corollary~\ref{cr:qf} we conclude that $\{p_k\}$ has only one weak
cluster point $(\bar x,\bar b)$, and  this (weak cluster) point belongs
to $S(A,B)$.  As $\{p_k\}$ is bounded and $H\times H$ is reflexive, the
sequence $\{p_k\}$ converges weakly to such point $(\bar x,\bar b)$,
which is equivalent 
\[
x_{k} \stackrel w \to \bar x,\;\;
b_{k} \stackrel w \to \bar b,\quad \mbox{ as } k\to\infty,
\]
To end the proof, use the above equation and~\eqref{eq:conv.0} to
conclude that
\[
y_{k} \stackrel w \to \bar x,\;\;
a_{k} \stackrel w \to -\bar b,\quad \mbox{ as } k\to\infty.
\]
\end{proof}

The convergence analysis presented is are based on the framework and
techniques introduced in \cite{ES1,ES2}, and becomes more intuitive
using this framework and results.
To make this note shorter, we do not presented this framework here.
For historical reasons, here we
used the classical sumable error tolerance. 
\appendix

\section{An auxiliary result}
\label{sec:app}

Let $X$ be a real Banach space with topological dual $X^*$. For $x\in X$,
$x^*\in X^*$ we use the notation $\inner{x}{x^*}=x^*(x)$.
An operator $T:X\tos X^*$ is monotone if 
$\inner{x-y}{x^*-y^*}\geq 0$ for all $(x,x^*),(y,y^*)\in T$, and is maximal monotone
if it is monotone and  maximal in the family
of monotone operators with respect to the partial order of inclusion.
\begin{lemma}
  \label{lm:aux.0}
  Let $X$ be a real Banach space with topological dual $X^*$. If
  $T:X\tos X$ is maximal monotone, $\{(x_i,x^*_i)\}_{i\in I}$ is a net
  in $T$ which converges in the weak$\times$weak$*$ topology to $(\bar
  x,\bar x^*)$, then
  \[ 
  \lim\inf_{i\to\infty}\inner{x_i}{x^*_i}\geq
     \inner{\bar x}{\bar x^*}.
  \]
  Moreover, if the above inequality holds as an equality, then
  $(x,x^*)\in T$.
\end{lemma}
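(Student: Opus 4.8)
The plan is to turn monotonicity into a pointwise lower bound for $\inner{x_i}{x_i^*}$ that survives the passage to the limit along the net, and then to read off both assertions from that bound by invoking \emph{maximality} of $T$ (mere monotonicity will not suffice).

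First I would fix an arbitrary $(y,y^*)\in T$. Monotonicity of $T$ applied to the pairs $(x_i,x_i^*)$ and $(y,y^*)$ gives $\inner{x_i-y}{x_i^*-y^*}\geq 0$, that is,
\[
\inner{x_i}{x_i^*}\ \geq\ \inner{x_i}{y^*}+\inner{y}{x_i^*}-\inner{y}{y^*}.
\]
Since $y^*\in X^*$ is fixed while $x_i$ converges weakly to $\bar x$, and $y\in X$ is fixed while $x_i^*$ converges weak$*$ to $\bar x^*$, the right-hand side converges along the net to $\inner{\bar x}{y^*}+\inner{y}{\bar x^*}-\inner{y}{y^*}$. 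Taking $\lim\inf$ (which respects pointwise net inequalities) then yields the key estimate
\begin{equation}
\lim\inf_{i\to\infty}\inner{x_i}{x_i^*}\ \geq\ \inner{\bar x}{y^*}+\inner{y}{\bar x^*}-\inner{y}{y^*}
\ =\ \inner{\bar x}{\bar x^*}-\inner{\bar x-y}{\bar x^*-y^*},
\tag{$\star$}
\end{equation}
valid for \emph{every} $(y,y^*)\in T$.

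From $(\star)$ both parts follow. For the inequality, suppose for contradiction that $\lim\inf_{i\to\infty}\inner{x_i}{x_i^*}<\inner{\bar x}{\bar x^*}$; then $(\star)$ forces $\inner{\bar x-y}{\bar x^*-y^*}>0$ for every $(y,y^*)\in T$, so $T\cup\{(\bar x,\bar x^*)\}$ is monotone, and maximality of $T$ gives $(\bar x,\bar x^*)\in T$ --- but then the choice $(y,y^*)=(\bar x,\bar x^*)$ yields $0>0$, a contradiction. Hence $\lim\inf_{i\to\infty}\inner{x_i}{x_i^*}\geq\inner{\bar x}{\bar x^*}$. If moreover this holds with equality, then $(\star)$ forces $\inner{\bar x-y}{\bar x^*-y^*}\geq 0$ for every $(y,y^*)\in T$; again $T\cup\{(\bar x,\bar x^*)\}$ is monotone, and maximality of $T$ gives $(\bar x,\bar x^*)\in T$, as claimed.

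The limit manipulations in the first step are routine --- testing the weak and weak$*$ convergences against the fixed vectors $y^*$ and $y$, and using that $\lim\inf$ is monotone under pointwise inequalities of nets. The only step that genuinely matters, and the one to watch, is the use of maximality: the liminf inequality is false for a merely monotone $T$, and it is precisely maximality that turns the family of bounds $(\star)$ into the contradiction in the strict case and into the membership $(\bar x,\bar x^*)\in T$ in the borderline case.
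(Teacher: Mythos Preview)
Your proof is correct. Both conclusions are derived cleanly from the family of inequalities $(\star)$, and the appeal to maximality is exactly where it must be.

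The paper's proof is closely related but packaged differently: it introduces the Fitzpatrick function
\[
\varphi(x,x^*)=\sup_{(y,y^*)\in T}\big(\inner{x}{y^*}+\inner{y}{x^*}-\inner{y}{y^*}\big),
\]
observes that $\varphi$ is weak$\times$weak$*$ lower semicontinuous, and then quotes Fitzpatrick's theorem that $\varphi\geq\inner{\cdot}{\cdot}$ with equality precisely on $T$. Since $\varphi(x_i,x_i^*)=\inner{x_i}{x_i^*}$ for $(x_i,x_i^*)\in T$, one line of lower semicontinuity gives both statements. Your argument is essentially the elementary content of that Fitzpatrick result unpacked inline: taking the supremum of the right side of $(\star)$ over $(y,y^*)\in T$ is exactly $\varphi(\bar x,\bar x^*)$, and your contradiction/maximality steps reprove, in this special case, that $\varphi(\bar x,\bar x^*)\geq\inner{\bar x}{\bar x^*}$ and that equality forces membership in $T$. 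The upshot is that your route is self-contained (no external citation needed), while the paper's is shorter but leans on Fitzpatrick's representation theorem as a black box.
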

\begin{proof}
Let $\varphi:X\times X^*\to\BR$,
\[ \varphi(x,x^*)=\sup_{(y,y^*)\in T}\inner{x}{y^*}+\inner{y}{x^*}
    -\inner{y}{y^*}
\]
The function $\varphi$ is Fitzpatrick minimal function~\cite{Fitz} of
$T$, it is lower semicontinuous in the weak$\times$weak$*$ topology,
$\varphi(x,x^*)\geq\inner{x}{x^*}$
for all $(x,x^*)$ and this inequality holds as an equality if and only
if $(x,x^*)\in T$. Therefore $\inner{x_i}{x^*_i}=\varphi(x_i,x^*_i)$
for all $i\in I$ and
\[
\lim\inf_{i\to\infty}\inner{x_i}{x^*_i}
= \lim\inf_{i\to\infty}\varphi(x_i,x^*_i)\geq\varphi(\bar x,\bar x^*)
\geq \inner{\bar x}{\bar x^*}
\]
To end the proof, use the fact that $\varphi$ is bounded below by the
duality product and coincide with the duality product if and only if
$(x,x^*)\in T$.
\end{proof}

\begin{lemma}
  \label{lm:aux}
Let $X$ be real Banach space. If
$T_1,\dots,T_m:X\tos X^*$ are maximal monotone operators
and $\{(x_{k,i},x^*_{k,i})\}_{i\in I}$ are bounded nets such that
$(x_{k,i},x^*_{k,i})\in T_k$ for all $k=1,\dots,m$ $i\in I$,
and
\begin{align*}
x_{k,i}-x_{j,i}&\to 0\;\;\; j,k=1,\dots,m &
\sum_{k=1}^mx^*_{k,i}&\to \bar x^* \\
x_{k,i}&\stackrel w \to \bar x,&
    x^*_{k,i}&\stackrel {w*} \to {\bar x_k}^*\qquad k=1,\dots,m
\end{align*}
 as $i\to\infty$, then $ (\bar x,{\bar x_k}^*)\in T_k$ for $k=1,\dots,m$.
\end{lemma}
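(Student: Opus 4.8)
The plan is to reduce the statement to a single application of Lemma~\ref{lm:aux.0}, applied to the product operator. Put $\mathbf X=X^m$, a real Banach space with dual $(X^*)^m$, and let $T:=T_1\times\cdots\times T_m:\mathbf X\tos(X^*)^m$, where $\big((x_1,\dots,x_m),(x_1^*,\dots,x_m^*)\big)\in T$ means $(x_k,x_k^*)\in T_k$ for every $k$; being a finite product of maximal monotone operators, $T$ is maximal monotone. The net $\mathbf q_i:=\big((x_{1,i},\dots,x_{m,i}),(x_{1,i}^*,\dots,x_{m,i}^*)\big)$ lies in $T$, and by the componentwise hypotheses $x_{k,i}\stackrel w\to\bar x$ and $x_{k,i}^*\stackrel{w*}\to\bar x_k^*$ it converges, in the weak$\times$weak$*$ topology of $\mathbf X\times(X^*)^m$, to $\bar{\mathbf q}:=\big((\bar x,\dots,\bar x),(\bar x_1^*,\dots,\bar x_m^*)\big)$. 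Recalling that the duality pairing on $\mathbf X\times(X^*)^m$ sends $\mathbf q_i$ to $\sum_{k=1}^m\inner{x_{k,i}}{x_{k,i}^*}$, Lemma~\ref{lm:aux.0} gives $\liminf_i\sum_{k=1}^m\inner{x_{k,i}}{x_{k,i}^*}\ge\sum_{k=1}^m\inner{\bar x}{\bar x_k^*}$, and tells us that if this holds with equality then $\bar{\mathbf q}\in T$, i.e. $(\bar x,\bar x_k^*)\in T_k$ for all $k$, which is exactly the conclusion sought. So it remains to prove the reverse inequality; in fact I will show the net $\sum_{k=1}^m\inner{x_{k,i}}{x_{k,i}^*}$ converges to $\sum_{k=1}^m\inner{\bar x}{\bar x_k^*}$.

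For this I would exploit the two hypotheses not yet used, $x_{k,i}-x_{1,i}\to 0$ and $\sum_{k}x_{k,i}^*\to\bar x^*$, via the splitting
\[
\sum_{k=1}^m\inner{x_{k,i}}{x_{k,i}^*}
 =\Inner{x_{1,i}}{\sum_{k=1}^m x_{k,i}^*}+\sum_{k=1}^m\inner{x_{k,i}-x_{1,i}}{x_{k,i}^*}.
\]
In the first term $\sum_k x_{k,i}^*\to\bar x^*$ in norm while $\{x_{1,i}\}$ is bounded and $x_{1,i}\stackrel w\to\bar x$, so the elementary estimate $|\Inner{x_{1,i}}{\sum_k x_{k,i}^*}-\inner{\bar x}{\bar x^*}|\le\norm{x_{1,i}}\,\norm{\sum_k x_{k,i}^*-\bar x^*}+|\inner{x_{1,i}-\bar x}{\bar x^*}|$ shows this term tends to $\inner{\bar x}{\bar x^*}$. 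Each of the finitely many summands in the second sum has modulus at most $\norm{x_{k,i}-x_{1,i}}\,\norm{x_{k,i}^*}$, which tends to $0$ because $x_{k,i}-x_{1,i}\to 0$ and $\{x_{k,i}^*\}$ is bounded, so the second sum tends to $0$. Finally $\bar x^*=\sum_{k=1}^m\bar x_k^*$, since $\sum_k x_{k,i}^*$ converges to $\bar x^*$ in norm, hence weak$*$, and also weak$*$ to $\sum_k\bar x_k^*$, and weak$*$ limits are unique. Hence $\lim_i\sum_{k=1}^m\inner{x_{k,i}}{x_{k,i}^*}=\inner{\bar x}{\bar x^*}=\sum_{k=1}^m\inner{\bar x}{\bar x_k^*}$, which completes the argument.

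The one genuinely non-routine point is realizing that the lower-semicontinuity bound of Lemma~\ref{lm:aux.0} must be tight in this situation: the hypothesis $x_{k,i}-x_{1,i}\to 0$ collapses all the primal slots onto the single point $x_{1,i}$, while $\sum_k x_{k,i}^*\to\bar x^*$ turns the sum of pairings into one pairing of a bounded weakly convergent net against a strongly convergent one, forcing convergence to the pairing at the limit --- and then the ``moreover'' clause of Lemma~\ref{lm:aux.0} does the rest. If one wishes to avoid invoking maximal monotonicity of the product operator, the same conclusion is reached by applying Lemma~\ref{lm:aux.0} to each $T_k$ individually: one has $\liminf_i\inner{x_{k,i}}{x_{k,i}^*}\ge\inner{\bar x}{\bar x_k^*}$ for every $k$, while the computation above shows the sum over $k$ of these (bounded) real nets converges to $\sum_k\inner{\bar x}{\bar x_k^*}$; passing to a subnet along which every $\inner{x_{k,i}}{x_{k,i}^*}$ converges, nonnegativity of the differences forces each $\liminf$ to be attained with equality, and Lemma~\ref{lm:aux.0} again yields $(\bar x,\bar x_k^*)\in T_k$.
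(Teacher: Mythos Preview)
Your proof is correct and essentially follows the paper's route: the same algebraic splitting
\[
\sum_{k}\inner{x_{k,i}}{x_{k,i}^*}
 =\Inner{x_{1,i}}{\sum_{k} x_{k,i}^*}+\sum_{k}\inner{x_{k,i}-x_{1,i}}{x_{k,i}^*}
\]
is the heart of both arguments, and your ``alternative'' at the end (apply Lemma~\ref{lm:aux.0} to each $T_k$, then use $\liminf_i\alpha_{k,i}\ge 0$ together with $\sum_k\alpha_{k,i}\to 0$ to force each $\alpha_{k,i}\to 0$) is exactly what the paper does. Your primary packaging via the product operator $T_1\times\cdots\times T_m$ on $X^m$ is a mild streamlining---it replaces the liminf-sandwich step by a single invocation of the ``moreover'' clause of Lemma~\ref{lm:aux.0}---at the cost of citing the (easy) fact that a finite product of maximal monotone operators is maximal monotone.
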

\begin{proof}
  In view of the above assumptions,
  \[ \sum_{k=1}^m  {\bar x_k}^*=\bar x^*
  \]
  Define
  \[
  \alpha_{k,i}=\inner{x_{k,i}}{x^*_{k,i}}-\inner{\bar x}{\bar x^*_k},
  \quad k=1,\dots,m\;i\in I.
  \]
  Direct algebraic manipulations yield
  \begin{align*}
    \sum_{k=1}^m\alpha_{k,i}=&\left(
    \sum_{k=0}^m\inner{x_{k,i}}{x^*_{k,i}}\right)
    -\inner{\bar x}{\bar x^*}\\
    =&\left(
    \sum_{k=0}^m\inner{x_{k,i}-x_{1,i}}{x^*_{k,i}}\right)
    +\Inner{x_{1,i}}{   \left( \sum_{k=0}^mx^*_{k,i}\right)-\bar x^*}
    +\inner{x_{1,i}-\bar x}{\bar x^*},
  \end{align*}
  which ready implies, in view of the assumptions of the lemma, that
  \[
  \lim_{i\to\infty} \sum_{k=1}^m\alpha_{k,i}=0
  \]
  Using the first part of Lemma~\ref{lm:aux.0} we have
  \[ \lim\sup_{i\to\infty}\alpha_{k,i}\geq 0, \qquad k=1,\dots,m.
  \]
  Combining the two above equations we conclude that
  \(\lim_{i\to\infty} \alpha_{k,i}=0\) for $k=1,\dots,m$,
  that is
  \[ \lim_{i\to\infty}\inner{x_{k,i}}{x^*_{k,i}}
     =\inner{\bar x}{\bar x^*_k}
  \]
  To end the proof, use the above equation and the second part of
  Lemma~\ref{lm:aux.0}.
\end{proof}

\bibliographystyle{plain}

\end{document}